\let\oldmarginpar\marginpar
\renewcommand\marginpar[1]{\-\oldmarginpar[\raggedleft\footnotesize #1]%
{\raggedright\footnotesize #1}}
\theoremstyle{plain}
\newtheorem{theorem}[equation]{Theorem}
\newtheorem{lemma}[equation]{Lemma}
\newtheorem{proposition}[equation]{Proposition}
\newtheorem{corollary}[equation]{Corollary}
\theoremstyle{definition}
\newtheorem{definition}[equation]{Definition}
\newtheorem{example}[equation]{Example}
\theoremstyle{remark}
\newtheorem{remark}[equation]{Remark}
\numberwithin{equation}{section}
\newcommand{\R}{\mathbb{R}}
\newcommand{\Rn}{\mathbb{R}^n}
\newcommand{\Rm}{\mathbb{R}^m}
\renewcommand{\phi}{\varphi}
\renewcommand{\epsilon}{\varepsilon}
\def\le{\leqslant}
\def\leq{\leqslant}
\def\ge{\geqslant}
\def\phi{\varphi}
\def\rho{\varrho}
\def\vartheta{\theta}
\newcommand{\Phiw}{\Phi_{\text{\rm w}}}
\newcommand{\Phis}{\Phi_{\text{\rm s}}}
\def\esssup{\operatornamewithlimits{ess\,sup}}
\def\essinf{\operatornamewithlimits{ess\,inf}}
\DeclareMathOperator{\divop}{div}
\renewcommand{\div}{\divop}
\def\conv{{\rm conv}}
\newcommand{\ainc}[1]{\hyperref[ainc]{{\normalfont(aInc){\ensuremath{_{#1}}}}}}
\newcommand{\adec}[1]{\hyperref[adec]{{\normalfont(aDec){\ensuremath{_{#1}}}}}}
\newcommand{\inc}[1]{\hyperref[inc]{{\normalfont(Inc){\ensuremath{_{#1}}}}}}
\newcommand{\dec}[1]{\hyperref[dec]{{\normalfont(Dec){\ensuremath{_{#1}}}}}}
\newcommand{\azero}{\hyperref[azero]{{\normalfont(A0)}}}
\newcommand{\aone}{\hyperref[aone]{{\normalfont(A1)}}}
\newcommand{\condM}{\hyperref[condM]{{\normalfont(M)}}}
\newcommand{\aonen}[1]{\hyperref[aonen]{{\normalfont(A1-\ensuremath{#1})}}}
\newcommand{\VAn}[1]{\hyperref[VA1n]{{\normalfont(VA1-\ensuremath{#1})}}}
\newcommand{\wVAn}[1]{\hyperref[wVA1n]{{\normalfont(wVA1-\ensuremath{#1})}}}
\newcommand{\condMn}[1]{\hyperref[condM]{{\normalfont(M-\ensuremath{#1})}}}
\date{\today}
\begin{document}

\title[A fundamental condition in anisotropic generalized Orlicz spaces]{A fundamental condition for harmonic analysis in anisotropic generalized Orlicz spaces}
\author{Peter A.\ Hästö}

\subjclass[2020]{46E30 (46A55, 51M16)}
\keywords{Generalized Orlicz space, Musielak--Orlicz spaces, anisotropic, 
nonstandard growth, variable exponent, double phase, Jensen's inequality}

\begin{abstract}
Anisotropic generalized Orlicz spaces have been investigated in many 
recent papers, but the basic assumptions are not as well understood as in the 
isotropic case. 
We study the greatest convex minorant of anisotropic $\Phi$-functions 
and prove the equivalence of two widely used conditions in the theory of 
generalized Orlicz spaces, usually called (A1) and (M). 
This provides a more natural and easily verifiable condition for use 
in the theory of anisotropic generalized Orlicz spaces for 
results such as Jensen's inequality which we obtain as a corollary.
\end{abstract}

\maketitle


\section{Introduction}

This paper deals with generalized Orlicz spaces, also known as Musielak--Orlicz spaces. 
This is a very active field recently \cite{BaaB22, BenHHK21, BorC_pp, DefM21, HadV_pp, HarH21, MaeMO21, PapRZ22, SkrV21}, 
boosted by work on the double phase problem 
by Baroni, Colombo and Mingione, e.g.\ \cite{BarCM18, ColM15a}. The generalized 
Orlicz case unifies the study of the 
double phase problem and the variable exponent growth, widely researched over the last 20 years \cite{DieHHR11}. Many studies deal with isotropic energies of the type 
\[
\int_\Omega \phi(x, |\nabla u|)\, dx
\]
but recently also the anisotropic case 
\[
\int_\Omega \Phi(x, \nabla u)\, dx
\]
has been considered e.g.\ in \cite{AhmCGY18, BulGS21, Chl18, ChlGSW21, HasO_pp, LiYZ21}. As 
an example of an anisotropic energy with non-standard growth 
we could take a double-phase functional where the $q$-phase is directional:
\[
\int_\Omega |\nabla u|^p + a(x) |\partial_{x_1} u|^q\, dx\,;
\]
here only variation in the $x_1$-direction makes a contribution to the energy 
in the $q$-phase $\{a>0\}$. 

Counter examples (see, e.g., \cite{BalDS20, DieHHR11}) show that more advanced results such as the boundedness of averaging operators or density of smooth functions require connecting 
$\Phi(x, \xi)$ for different values of $x$. To this end, we developed in the isotropic case the \aone{} 
condition \cite{HarH19, Has15} (see also \cite{MaeMOS13a}), which is essentially 
optimal for the boundedness of the maximal operator. 
In the anisotropic case Chlebicka, Gwiazda, Zatorska-Goldstein and co-authors \cite{AhmCGY18, BulGS21, Chl18, ChlGSW21, ChlGZ18, ChlGZ18b, ChlGZ19, GwiSZ18} have developed a theory based on their \condM{} condition. 
To state and compare these conditions, let us define 
\begin{equation}\label{eq:phiPlusMinus}
\Phi_{B}^-(\xi):=\essinf_{x\in B\cap \Omega}\Phi(x,\xi)
\qquad\text{and}\qquad
\Phi_{B}^+(\xi):=\esssup_{x\in B\cap \Omega}\Phi(x,\xi).
\end{equation}
In essence, the \aone{} conditions says that $\Phi_B^+$ can be bounded 
by $\Phi_B^-$ in small balls $B\subset\Rn$ in a quantitative way, whereas \condM{} 
say that it can be similarly bounded by the least convex minorant 
$(\Phi_B^-)^\conv$ of $\Phi_B^-$ (see Definition~\ref{def:conditions}). Obviously, the latter is a stronger condition, and 
it is also more difficult to verify, since the relationship between 
$(\Phi_B^-)^\conv$ and $\Phi_B^-$ may be complicated in the anisotropic case. 

In the isotropic case $\Phi_B^- \lesssim (\Phi_B^-)^\conv$ so  
\condM{} and \aone{} are equivalent. In the anisotropic case this inequality 
does not hold (see Example~\ref{ex:noEquivalence}), but we are nevertheless able to prove the equivalence of the conditions by a more careful analysis. 

\begin{theorem}\label{thm:equiv}
Let $\Phi:\Omega\times\Rm\to [0,\infty]$ be a strong $\Phi$-function. 
Then \aone{} and \condM{} are equivalent.
\end{theorem}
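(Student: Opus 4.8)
\emph{The structural observation and the easy direction.} The whole argument rests on the asymmetry between $\Phi_B^+$ and $\Phi_B^-$: the infimum $\Phi_B^-$ need not be convex, but the supremum $\Phi_B^+=\esssup_{x\in B\cap\Omega}\Phi(x,\cdot)$ \emph{is} convex, being a pointwise supremum of the convex functions $\Phi(x,\cdot)$. Hence $\xi\mapsto\Phi_B^+(\beta\xi)$ is convex for every $\beta\in(0,1]$, and for any constant $c\ge0$ the greatest convex minorant of $\Phi_B^-+c$ equals $(\Phi_B^-)^\conv+c$. The implication \condM{}$\Rightarrow$\aone{} is then immediate: since $(\Phi_B^-)^\conv\le\Phi_B^-$ pointwise, a bound on $\Phi_B^+(\beta\xi)$ in terms of $(\Phi_B^-)^\conv(\xi)$ is a fortiori a bound in terms of $\Phi_B^-(\xi)$, and the admissible set for \aone{} (a sublevel set of $\Phi_B^-$) is contained in the one for \condM{} (the corresponding sublevel set of $(\Phi_B^-)^\conv$); so \aone{} follows with the same constant. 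All the content of the theorem is the converse.

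\emph{Why the converse is not trivial.} Suppose momentarily that \aone{} held with no restriction on the range, i.e. $\Phi_B^+(\beta\xi)\le\Phi_B^-(\xi)+1$ for \emph{all} $\xi\in\Rm$. Then $\xi\mapsto\Phi_B^+(\beta\xi)$ would be a convex minorant of $\Phi_B^-+1$, hence $\le(\Phi_B^-)^\conv+1$, and \condM{} would follow at once. The difficulty is entirely the range restriction: \aone{} controls $\Phi_B^+(\beta\xi)$ only where $\Phi_B^-(\xi)\lesssim|B|^{-1}$, whereas $(\Phi_B^-)^\conv(\xi)$ at such a point is an infimum of convex combinations $\sum_i\lambda_i\Phi_B^-(\xi_i)$ whose points $\xi_i$ may legitimately lie far outside that range --- this is exactly the phenomenon in Example~\ref{ex:noEquivalence}, where $\Phi_B^-\not\lesssim(\Phi_B^-)^\conv$, so $(\Phi_B^-)^\conv$ cannot be recovered from $\Phi_B^-$ on a bounded set alone.

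\emph{Plan for \aone{}$\Rightarrow$\condM{}.} Fix a small ball $B$ and $\xi_0$ in the admissible set of \condM{}. First I would establish that \aone{} is insensitive, up to decreasing $\beta$, to enlarging its threshold $|B|^{-1}$ to $\kappa|B|^{-1}$ for a fixed large $\kappa$ --- a rescaling argument along rays, using that each $r\mapsto\Phi(x,r\xi)$ is convex, increasing and vanishes at $0$. Next, by Carathéodory's theorem applied to the epigraph of $\Phi_B^-$ in $\Rm\times\R$, for every $\epsilon>0$ write $(\Phi_B^-)^\conv(\xi_0)$ up to the error $\epsilon$ as $\sum_{i=1}^{m+2}\lambda_i\Phi_B^-(\xi_i)$ with $\sum_i\lambda_i\xi_i=\xi_0$, $\sum_i\lambda_i=1$. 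Split the indices into the \emph{near} ones, with $\Phi_B^-(\xi_i)\le\kappa|B|^{-1}$, and the \emph{far} ones. On the near indices the enlarged \aone{} gives $\Phi_B^+(\beta\xi_i)\le\Phi_B^-(\xi_i)+1$. On the far indices, $\lambda_i\Phi_B^-(\xi_i)\le(\Phi_B^-)^\conv(\xi_0)+\epsilon\lesssim|B|^{-1}$ forces the weights $\lambda_i$ to be small once $\kappa$ is large; I would then replace each far $\xi_i$ inside the combination by the largest rescaling $s_i\xi_i$ with $\Phi_B^-(s_i\xi_i)\le\kappa|B|^{-1}$ and absorb the resulting barycentric defect into the combination so as to keep the barycentre at $\xi_0$ and increase $\sum_i\lambda_i\Phi_B^-(\cdot)$ by only a controlled amount. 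Feeding the modified ``all near'' representation into Jensen's inequality for the convex function $\xi\mapsto\Phi_B^+(\gamma\xi)$ (with $\gamma$ a suitable fraction of $\beta$) and applying the enlarged \aone{} termwise yields $\Phi_B^+(\gamma\xi_0)\le\sum_i\lambda_i\bigl(\Phi_B^-(\xi_i)+1\bigr)+C\le(\Phi_B^-)^\conv(\xi_0)+\epsilon+C$; letting $\epsilon\to0$, taking $\esssup$ over $x\in B$, and renormalising $\gamma$ and the additive constant back to the form required in Definition~\ref{def:conditions} by one more rescaling gives \condM{}.

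\emph{The main obstacle, and the corollary.} The crux is the treatment of the far points. For a general strong $\Phi$-function there is no upper bound for $\Phi_B^+$ at a far point in terms of $\Phi_B^-$ there and $|B|$ (no polynomial-type growth restriction is assumed), so one cannot keep the far points in the convex combination and simply estimate their Jensen contribution: the representation itself must be altered, each far point pulled back onto the boundary of the enlarged range and the small leftover barycentric mass redistributed, without losing more than a bounded factor, and uniformly in $B$ and $\xi_0$. This is precisely the ``more careful analysis'' of the greatest convex minorant anticipated in the introduction, and is where essentially all the work goes; I expect the delicate point to be that the pulled-back direction may still be ``far'' in the convex-minorant sense, so the redistribution has to be arranged with care rather than naively. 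Once Theorem~\ref{thm:equiv} is available, Jensen's inequality for $\Phi$ follows as a corollary: \condM{} reduces the statement to classical Jensen for the genuinely convex function $(\Phi_B^-)^\conv$, together with the trivial chain $(\Phi_B^-)^\conv\le\Phi_B^-\le\Phi(x,\cdot)$.
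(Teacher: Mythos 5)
Your easy direction (\condM{} $\Rightarrow$ \aone{}) and your diagnosis of where the difficulty lies are both correct and agree with the paper. But the sketch of \aone{} $\Rightarrow$ \condM{} is not a proof: all of the content has been pushed into the ``absorption'' of the barycentric defect $d=\sum_{\text{far }i}\lambda_i(1-s_i)\xi_i$, and that step is neither carried out nor clearly carriable out. The weights $\lambda_i$ of far indices are indeed $O(K/\kappa)$, but the vectors $\lambda_i(1-s_i)\xi_i$ need not be small: the theorem does not assume \azero{}, so there is no lower bound for $\Phi_B^-$ in terms of $|\xi|$, hence no bound on $|\xi_i|$ in terms of $\Phi_B^-(\xi_i)$, and $s_i$ can be arbitrarily close to $0$; thus $|d|$ is completely uncontrolled. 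It is also unclear where to put $d$ once you have it: adding it to a near point can make that point far or make $\Phi_B^-$ there explode, and distributing it among several near points inherits the same problem piecewise. You flag exactly this (``the pulled-back direction may still be far'') but do not resolve it, so the argument stops precisely where the theorem begins.

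The paper takes a structurally different route that never touches the Carath\'eodory decomposition of $\Phi_B^-$ directly. It fixes the level $s\approx K/\mu(B)$, forms the sublevel set $K_s=\{\Phi_B^+(\beta\cdot)\le s\}$, and builds from its Minkowski functional the linear-growth function $N_s$ and the truncation $M_s:=\min\{\Phi_B^+(\beta\cdot),N_s\}$. Two facts then carry the proof: first, $M_s$ is almost convex (Proposition~\ref{prop:almostConvex}; this geometric lemma on level sets is where the real work now resides), so Corollary~\ref{cor:Mstar}, which is the Carath\'eodory input, gives $M_s(\beta'\xi)\le (M_s)^\conv(\xi)$; second, a direct case analysis (inside $K_s$ using \aone{} on its legitimate range, outside $K_s$ by scaling to $\partial K_s$ where $\Phi_B^-$ is forced to be at least $s-1$) shows $M_s\le(1+\tfrac1K)\Phi_B^-+1$ everywhere. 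Monotonicity of the greatest-convex-minorant operation then yields $(M_s)^\conv\le(1+\tfrac1K)(\Phi_B^-)^\conv+1$, and since $M_s$ coincides with $\Phi_B^+(\beta\cdot)$ on $K_s$ the \condM{} estimate follows. The idea your sketch is missing is exactly this replacement: rather than repairing a representation of $(\Phi_B^-)^\conv$ point by point, manufacture one globally almost convex function squeezed between $\Phi_B^+(\beta\cdot)$ (where it matters) and a multiple of $\Phi_B^-+1$, and let the convex-minorant operation do the bookkeeping.
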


This result and the techniques 
introduced in this paper will allow for the development of a 
theory of anisotropic generalized Orlicz spaces with more natural assumptions. 
As an example we prove the following Jensen-type inequality. 

\begin{corollary}[Jensen-type inequality]\label{cor:jensen}
Let $\Phi$ satisfy \aone{} and $f\in L^\Phi_\mu(\Omega; \Rm)$. Then there exists $\beta>0$ such that 
\[
\Phi_B^+ \bigg( \beta \fint_B f\, d\mu\bigg) 
\le 
\fint_B \Phi(x, f)\, d\mu +1 
\]
when $\rho_\Phi(f)\le 1$ and $\mu(B)\le 1$. 
\end{corollary}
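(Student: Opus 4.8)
The plan is to deduce the inequality from Theorem~\ref{thm:equiv}, which tells us that \aone{} implies \condM{}. The point is that \condM{} is precisely the form of \aone{} stated in terms of the greatest convex minorant, so once we know $(\Phi_B^-)^\conv$ is available as a lower bound we are in a position to apply the classical Jensen inequality for convex functions. Concretely: since $\Phi$ satisfies \aone{}, by Theorem~\ref{thm:equiv} it satisfies \condM{}, so there is a constant $\beta\in(0,1]$ (and, depending on the exact formulation of \condM{} in Definition~\ref{def:conditions}, possibly an additive term controlled by $\Phi$-normalization and by $\mu(B)\le 1$) such that
\[
\Phi_B^+(\beta \xi) \le (\Phi_B^-)^\conv(\xi) + (\text{error})
\]
for all $\xi\in\Rm$ with $|\xi|$ in the relevant range, where the error is absorbed using $\rho_\Phi(f)\le 1$ and $\mu(B)\le 1$.

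The main step is then to apply Jensen's inequality to the genuinely convex function $(\Phi_B^-)^\conv$. Writing $\xi = \fint_B f\, d\mu$, convexity gives
\[
(\Phi_B^-)^\conv\bigg(\fint_B f\, d\mu\bigg) \le \fint_B (\Phi_B^-)^\conv(f)\, d\mu \le \fint_B \Phi_B^-(f)\, d\mu \le \fint_B \Phi(x,f)\, d\mu,
\]
where the second inequality uses that $(\Phi_B^-)^\conv \le \Phi_B^-$ by definition of the convex minorant, and the third uses $\Phi_B^-(f(x)) \le \Phi(x,f(x))$ for a.e.\ $x\in B$ by definition of the essential infimum. Chaining this with the \condM{}-type bound on $\Phi_B^+(\beta\,\cdot\,)$ yields
\[
\Phi_B^+\bigg(\beta\fint_B f\, d\mu\bigg) \le \fint_B \Phi(x,f)\, d\mu + 1.
\]

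One technical point to watch is the integrability needed to invoke Jensen: we must ensure $\fint_B f\, d\mu$ is finite (so that $f\in L^1_{\loc}$), which follows from $f\in L^\Phi_\mu(\Omega;\Rm)$ together with $\azero$-type normalization of $\Phi$ and $\mu(B)\le 1$; and we must ensure $\fint_B (\Phi_B^-)^\conv(f)\,d\mu$ is finite, which follows from $\rho_\Phi(f)\le 1$ via the pointwise bound $(\Phi_B^-)^\conv(f) \le \Phi(x,f)$. The additive constant $1$ on the right absorbs both the error term in the \condM{} inequality (after using the hypotheses $\rho_\Phi(f)\le1$ and $\mu(B)\le1$ to bound that error by a controlled quantity) and any normalization constants, possibly after shrinking $\beta$. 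The hard part is really all contained in Theorem~\ref{thm:equiv}; given that result, this corollary is a short and essentially formal consequence, the only genuine work being the careful bookkeeping of the additive error and of $\beta$.
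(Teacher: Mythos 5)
Your proof is correct and follows essentially the same route as the paper: pass from \aone{} to \condM{} via Theorem~\ref{thm:equiv}, apply Jensen's inequality to the convex function $(\Phi_B^-)^\conv$, chain with $(\Phi_B^-)^\conv\le\Phi_B^-\le\Phi(x,\cdot)$, and use $\rho_\Phi(f)\le 1$ and $\mu(B)\le 1$ to verify that $\xi=\fint_B f\,d\mu$ satisfies $(\Phi_B^-)^\conv(\xi)\le K/\mu(B)$ (with $K=1$) so that \condM{} is applicable. The only small imprecision is your language about ``absorbing an error'': the ``$+1$'' in the conclusion is exactly the additive term appearing in \condM{} and needs no absorption, and the hypotheses $\rho_\Phi(f)\le 1$, $\mu(B)\le 1$ serve solely to place $\xi$ in the admissible range of that condition.
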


Although the extra assumption $\rho_\Phi(f)\le 1$ in the corollary may seem strange, it 
follows naturally for instance when dealing with local regularity and it is known that the 
anisotropic Jensen-inequality does not hold without restrictions.

Let us next define precisely the concepts we are using and characterize functions 
$\Phi$ for which the equivalence $\Phi^\conv \simeq \Phi$ holds (Corollary~\ref{cor:Mstar}). 
In Section~\ref{sect:conditions}, we define the conditions \aone{} and \condM{} and 
give preliminary remarks regarding the definitions. Finally, in Section~\ref{sect:main} 
we prove the main results mentioned above.


\section{Almost convexity and the greatest convex minorant}

I refer to the monographs \cite{HarH19} and \cite{ChlGSW21} for background 
on isotropic and anisotropic generalized Orlicz spaces, respectively. 
We consider functions $\Phi:\Omega\times \Rm\to [0,\infty]$; the capital letter $\Phi$ is used to 
highlight the distinction from the isotropic case $L^\phi$ in \cite{HarH19} where 
$\phi:\Omega\times [0,\infty)\to [0,\infty]$. 
The idea is to define 
\[
\rho_\Phi(v) := \int_\Omega \Phi(x, v)\, d\mu
\qquad\text{and}\qquad
\|v\|_\Phi:= \inf\{\lambda>0 \,|\, \rho_\Phi(\tfrac v\lambda) \le 1\}
\]
for a vector field $v\in L^1_\mu(\Omega; \Rm)$. The space $L^\Phi_\mu(\Omega; \Rm)$ 
is defined by the requirement $\|v\|_\Phi<\infty$. 
We use the equivalence relation $\Phi\simeq\Psi$ 
which means that there exists $\beta>0$ such that 
\[
\Phi(\beta \xi) \le \Psi(\xi) \le \Phi(\tfrac \xi\beta).
\]
Here and in the rest of the paper $\beta$ denotes a parameter which is given by 
by one or more conditions; if the conditions hold with different $\beta_k$, then 
we can use $\beta:=\min_k\beta_k$ for all the conditions so that we may just 
as well use only the one common $\beta$. 
Since the parameter $\lambda$ is inside $\Phi$ in the definition of $\|v\|_\Phi$, this is the natural 
way to compare functions $\Phi$ (cf.\ Example~\ref{eg:constInside}). 
To ensure that the integral in $\rho_\Phi$ makes sense and $\|\cdot\|_\Phi$ is a norm 
we require some conditions. 

\begin{definition}
Let $\Omega\subset \Rn$ be an open set. 
We say that $\Phi:\Omega\times \Rm\to [0,\infty]$ is a \emph{strong $\Phi$-function}, 
and write $\Phi\in \Phis(\Omega)$,  
if the following four conditions hold:
\begin{enumerate}
\item 
$x \mapsto \Phi(x, \xi)$ is measurable for every $\xi\in\Rm$.
\item 
$\displaystyle \Phi(x, 0) = \lim_{\xi \to 0} \Phi(x,\xi) =0$ and $\displaystyle \lim_{\xi \to \infty}\Phi(x,\xi)=\infty$ for a.e.\ $x\in \Omega$.
\item
$\xi\mapsto\Phi(x,\xi)$ is continuous in the topology of 
$[0,\infty]$ for a.e.\ $x\in \Omega$.
\item 
$\Phi$ is convex for a.e.\ $x\in\Omega$:
\[
\Phi(x, \alpha \xi + \alpha'\xi') 
\le 
\alpha \Phi(x, \xi) + \alpha' \Phi(x, \xi'), \quad \alpha, \alpha'\ge 0,\ \alpha+\alpha'=1. 
\]
\end{enumerate}
\end{definition}

With these conditions, $\|\cdot\|_\Phi$ is a norm. Note that continuity in $\xi$ follows from 
convexity if $\Phi$ is real-valued and (3) is only needed to ensure that 
$\Phi$ does not jump to $\infty$. Note also that this class of strong $\Phi$-functions is 
broader than that studied \cite{ChlGSW21} since we do not require that 
upper and lower bounds in terms of $N$-functions independent of $x$. For instance, 
this definition allows for $L^1$- and $L^\infty$-type growth. In \cite{HarH19} in the 
isotropic case we relaxed (3) and (4) further, and so used ``strong'' for this class, 
even though it is still less restrictive than $N$-functions.

For the study of $\Phi$-functions depending on the space-variable $x$, 
we use local approximations with the functions $\Phi_B^+$ and $\Phi_B^-$ from \eqref{eq:phiPlusMinus}
\cite{HarH19, Has15}. However, $\Phi_B^-$ need not 
be convex even if each $\Phi(x,\cdot)$ is (just think of $\min\{t,t^2\}$). 
In the isotropic case, $\phi_B^-$ nevertheless satisfies the following 
weaker variant of (4) above:
\begin{enumerate}
\item[(W4)]
$\Phi$ is \emph{almost convex} if there exists $\beta>0$ such that 
\[
\Phi\big(x, \beta(\alpha \xi + \alpha'\xi')\big) 
\le 
\alpha \Phi(x, \xi) + \alpha' \Phi(x, \xi'), 
\] 
for a.e.\ $x\in \Omega$ and $\alpha, \alpha'\ge 0$ with $\alpha+\alpha'=1$.
\end{enumerate}
Unfortunately, even this does not hold for $\Phi_B^-$ in the anisotropic case (see Example~\ref{ex:noEquivalence}).


The constant $\beta$ in the almost convexity condition (W4) 
should be inside the function since we do not assume 
doubling, or even finite, functions, as the following example illustrates. 
A constant outside is possible, but too restrictive.

\begin{example}\label{eg:constInside}
Let $\phi_\infty(t):=\infty \chi_{(1,\infty)}(t)$ be the function generating 
the space $L^\infty(\Omega)$. Define $\Phi(\xi) := \phi_\infty(\|\xi\|_{1/2} ) 
= \phi_\infty(|\xi_1| + 2\sqrt{|\xi_1 \xi_2|}+|\xi_2|)$ 
in $\R^2$. Consider  $\alpha=\frac12$ and the basis vectors $\xi=e_1$ and $\xi'=e_2$ in (W4). 
Then $\|\tfrac{e_1+e_2}2\|_{1/2}=2$ so $\Phi(\tfrac{e_1+e_2}2) = \infty$ 
and the inequality 
\[
\Phi(\tfrac{e_1+e_2}2) \le \tfrac L2 [\Phi(e_1) + \Phi(e_2)] 
\]
does not hold for any $L<\infty$. However, the almost convexity inequality (W4)
\[
\Phi(\beta \tfrac{e_1+e_2}2) \le \tfrac12 [\Phi(e_1) + \Phi(e_2)] = 0. 
\]
holds for $\beta\le \frac12$ since in this case $\Phi(\beta \tfrac{e_1+e_2}2) = 0$. 
\end{example}

If we choose $\xi'=0$ in the almost convexity condition (W4), then we obtain
\begin{equation}\label{ainc}
\tag*{{(aInc)\(_1\)}}
\Phi(x, \beta\alpha \xi) 
\le 
\alpha \Phi(x, \xi) 
\qquad\text{for any}\quad
\alpha\in [0,1].
\end{equation}
In the special case $\beta=1$, i.e.\ for a convex function, we have 
\begin{equation}\label{inc}
\tag*{{(Inc)\(_1\)}}
\Phi(x, \alpha \xi) 
\le 
\alpha \Phi(x, \xi) 
\qquad\text{for any}\quad
\alpha\in [0,1].
\end{equation}
These inequalities mean that the function $t\mapsto \frac{\Phi(x, t\xi)}t$ is almost 
increasing or increasing, hence the notation \ainc{1} and \inc{1}. 
In \cite{HarH19} we showed that these inequalities are useful substitutes for 
convexity in the isotropic case. 
In particular, it is easy to see that $\Phi_B^+$ and $\Phi_B^-$ satisfy 
\ainc{1} or \inc{1} if $\Phi$ does. For the anisotropic case the 
almost convexity is more appropriate since it also carries information 
about non-radial behavior.

%
%

Let us denote by $\Phi^\conv$ the greatest convex minorant of $\Phi$. 
This function is often denoted by $\Phi^{**}$, since it can be obtained 
by applying the conjugation operation $^*$ twice \cite[Corollary~2.1.42]{ChlGSW21}, but we will not 
use this fact here. We next show a connection between the 
greatest convex minorant and the almost convexity condition. 
The following is a version of Carath\'{e}odory's Theorem from convex analysis. 
Probably it is known, but a proof is included for completeness, since I could not 
find a reference. 

\begin{lemma}\label{lem:caratheodory}
Let $\Phi:\Rm\to [0,\infty]$. Then 
\[
\Phi^\conv(\xi) = \min\bigg\{\sum_{k=1}^{m+1} \alpha_k \Phi(\xi_k) \,\bigg|\, 
\sum_{k=1}^{m+1} \alpha_k \xi_k =\xi,\ \sum_{k=1}^{m+1} \alpha_k = 1,\, \alpha_k\ge 0 \bigg\}.
\]
\end{lemma}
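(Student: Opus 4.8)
The plan is to reduce everything to the classical Carath\'eodory theorem applied to the epigraph of $\Phi$ in $\Rm\times\R\cong\R^{m+1}$, the only twist being to get $m+1$ rather than $m+2$ vertices. Write $\Psi(\xi)$ for the quantity on the right-hand side, first with $\inf$ in place of $\min$ and with no bound on the number $N$ of terms in the combination. Two soft inequalities already identify $\Psi$ with $\Phi^\conv$: on the one hand, for any convex $g\colon\Rm\to[0,\infty]$ with $g\le\Phi$, iterating convexity gives $g(\xi)=g\big(\sum_k\alpha_k\xi_k\big)\le\sum_k\alpha_k g(\xi_k)\le\sum_k\alpha_k\Phi(\xi_k)$ for every combination $\sum_k\alpha_k\xi_k=\xi$, $\sum_k\alpha_k=1$, $\alpha_k\ge0$, hence $g\le\Psi$; taking $g=\Phi^\conv$ yields $\Phi^\conv\le\Psi$. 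On the other hand, the one-term combination $\xi_1=\xi$, $\alpha_1=1$ (with the convention $0\cdot\infty=0$ on the discarded terms) shows $\Psi\le\Phi$, and a routine argument merging an almost-optimal combination for $\xi$ with one for $\xi'$, with weights scaled by $\lambda$ and $1-\lambda$, shows $\Psi$ is convex; thus $\Psi$ is itself a convex minorant of $\Phi$, so $\Psi\le\Phi^\conv$. Therefore $\Psi=\Phi^\conv$, which is the asserted identity with $\inf$ and arbitrary $N$.

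It remains to cut $N$ down to $m+1$ (and to replace $\inf$ by $\min$). Fix $\xi$ and a combination with finite value $S:=\sum_{k=1}^N\alpha_k\Phi(\xi_k)$; discarding zero weights we may assume $\alpha_k>0$, and then $S<\infty$ forces $\Phi(\xi_k)<\infty$, for all $k$. Form the polytope $\Pi:=\conv\{(\xi_k,\Phi(\xi_k)):k=1,\dots,N\}\subset\R^{m+1}$, which contains $(\xi,S)$. The intersection of $\Pi$ with the vertical line $\{\xi\}\times\R$ is a nonempty compact segment; let $(\xi,S^*)$ be its lowest point, so $S^*\le S$. If $\dim\Pi=m+1$ then $(\xi,S^*)\in\partial\Pi$ — otherwise a point just below it on the line would still lie in $\Pi$ — hence $(\xi,S^*)$ lies in some facet of $\Pi$, which has dimension $m$; if $\dim\Pi\le m$ take the whole of $\Pi$ instead. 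In all cases the relevant face is the convex hull of a subset of the $(\xi_k,\Phi(\xi_k))$ and spans an affine space of dimension $\le m$, so by Carath\'eodory $(\xi,S^*)$ is a convex combination of at most $m+1$ of these points. Reading off the first $m$ coordinates (and padding with zero weights) gives a combination $\sum_{j=1}^{m+1}\beta_j\xi_{k_j}=\xi$, $\sum_j\beta_j=1$, and the last coordinate gives $\sum_j\beta_j\Phi(\xi_{k_j})=S^*\le S$. Since $(m+1)$-term combinations are among all finite combinations, this shows the infimum over the former still equals $\Phi^\conv(\xi)$.

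To get $\min$ rather than $\inf$ I would run a compactness argument on a minimizing sequence of $(m+1)$-term combinations: the weight vectors lie in the compact simplex, so converge along a subsequence, and using $\Phi\ge0$ together with the growth $\Phi(\xi)\to\infty$ as $|\xi|\to\infty$ one controls the points $\xi_k$ — a point escaping to infinity with non-vanishing weight would make the value blow up, while a point with vanishing weight can be dropped — so the limit is again an $(m+1)$-term combination, and (semi)continuity of $\Phi$ makes its value $\le\Phi^\conv(\xi)$, hence equal. I expect this attainment step to be the delicate point: it is where the finiteness of $\Phi$ away from $0$ and, more importantly, the strength of its growth really enter (for merely linear growth one must be careful with the limiting procedure), whereas the reduction $m+2\to m+1$ via the lowest point of $\Pi$ and the two soft inequalities are routine once the right picture is in place; what remains is only bookkeeping with the conventions for $[0,\infty]$-valued convex functions and with $0\cdot\infty$.
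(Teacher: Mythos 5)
Your proof is correct and follows the same overall strategy as the paper's --- Carath\'eodory's theorem applied to the epigraph in $\R^{m+1}$, then save one vertex --- but with different execution, and you surface two points the paper treats more casually. For the dimension reduction, the paper applies Carath\'eodory twice: once in $\R^{m+1}$ to the epigraph (giving $m+2$ points), then a second time in $\Rm$ to the base points $\xi_k$, relying on the fact that for a boundary point of the convex hull all the $(\xi_k,\Phi(\xi_k))$ lie on a common supporting hyperplane, so that the value is preserved when points are dropped. You instead take the lowest point of the polytope $\Pi$ on the vertical line above $\xi$, observe it lies on a face of $\Pi$ of dimension at most $m$, and apply Carath\'eodory once within that face; this is a clean alternative that avoids the separate hyperplane argument. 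You also make explicit (i) the identification of $\Phi^\conv$ with the infimum over arbitrary finite combinations, which the paper folds into the statement that the graph of $\Phi^\conv$ is the boundary of the convex hull of the epigraph --- a statement that in general requires taking closures --- and (ii) the attainment of the minimum, which you correctly flag as needing a separate compactness argument using continuity and coercivity of $\Phi$. That flag is well placed: for arbitrary $\Phi:\Rm\to[0,\infty]$ the lemma as stated is actually false without such hypotheses (take $m=1$, $\Phi(0)=1$ and $\Phi(\xi)=|\xi|$ for $\xi\ne 0$; then $\Phi^\conv(0)=0$ but no admissible two-term combination attains it, so the asserted $\min$ is really an $\inf$). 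This is harmless in the paper since the lemma is applied only to strong $\Phi$-functions, for which your compactness step works, but your account is more careful than the paper's on precisely this point.
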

\begin{proof}
Consider the epigraph of $\Phi$, 
\[
E:= \{ (\xi, t) \in \Rm\times \R \,|\, \Phi(\xi)\le t \} \subset \R^{m+1}.
\]
By Carath\'{e}odory's Theorem (see, e.g., \cite[Theorem~2.1.3]{NicP18}), 
every point in the convex hull of $E$ can be represented as a convex 
combination of at most $m+2$ points $\xi_k$ from $E$. Furthermore, we observe 
that if any of the points $\xi_k$ are from the interior of $E$, then the convex 
combination is also in the interior of the convex hull. Thus the points 
of the boundary, i.e.\ the graph of $\Phi^\conv$, are given as a convex 
combination of points in the boundary of $E$, i.e.\ on the graph of 
$\Phi$. Hence 
\[
\Phi^\conv(\xi) = \sum_{k=1}^{m+2} \alpha_k \Phi(\xi_k) 
\quad\text{for some}\quad 
\sum_{k=1}^{m+2} \alpha_k \xi_k =\xi,\ \sum_{k=1}^{m+2} \alpha_k = 1,\, \alpha_k\ge 0.
\]
This is the claim, except with one extra point $\xi_{m+2}$. 

However, $\xi$ lies in the convex hull of $\xi_1,\ldots, \xi_{m+2}\in \Rm$. Thus by 
Carath\'{e}odory's Theorem in $\Rm$, $\xi$ can be expressed as the convex combination of 
at most $m+1$ of the points $\xi_1$, \ldots, $\xi_{m+2}$. By re-labeling if necessary, 
we obtain  
\[
\sum_{k=1}^{m+1} \alpha_k' \xi_k =\xi,\ \sum_{k=1}^{m+1} \alpha_k' = 1,\, \alpha_k'\ge 0.
\]
Since $\big(\xi_k, \Phi(\xi_k)\big)_{k=1}^{m+2}$, all lie on the same hyper-plane for a boundary point, we also have 
\[
\Phi^\conv(\xi) = \sum_{k=1}^{m+1} \alpha_k' \Phi(\xi_k). \qedhere 
\]
\end{proof}

We can now show that $\Phi\simeq \Phi^\conv$ for almost convex functions.

\begin{corollary}\label{cor:Mstar}
The function $\Phi:\Rm\to[0, \infty]$ is almost convex
if and only if $\Phi\simeq \Phi^\conv$.
\end{corollary}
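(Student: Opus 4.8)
The plan is to prove the two implications separately, using Lemma~\ref{lem:caratheodory} as the main engine.

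\textbf{The easy direction: $\Phi\simeq\Phi^\conv$ implies almost convexity.} Suppose there is $\beta>0$ with $\Phi(\beta\xi)\le\Phi^\conv(\xi)\le\Phi(\tfrac\xi\beta)$ for all $\xi$. Given $\alpha,\alpha'\ge 0$ with $\alpha+\alpha'=1$ and vectors $\xi,\xi'$, I would write, using that $\Phi^\conv$ is convex and lies below $\Phi$,
\[
\Phi\big(\beta(\alpha\xi+\alpha'\xi')\big)
\le
\Phi^\conv(\alpha\xi+\alpha'\xi')
\le
\alpha\,\Phi^\conv(\xi)+\alpha'\,\Phi^\conv(\xi')
\le
\alpha\,\Phi(\xi)+\alpha'\,\Phi(\xi').
\]
This is exactly (W4) with the same $\beta$, so this direction is essentially immediate.

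\textbf{The harder direction: almost convexity implies $\Phi\simeq\Phi^\conv$.} The inequality $\Phi^\conv\le\Phi$ is automatic from the definition of the greatest convex minorant, so the content is the reverse bound $\Phi(\beta\xi)\le\Phi^\conv(\xi)$ for a suitable (possibly smaller) $\beta$. Here I would apply Lemma~\ref{lem:caratheodory}: fix $\xi$ and pick a representation $\Phi^\conv(\xi)=\sum_{k=1}^{m+1}\alpha_k\Phi(\xi_k)$ with $\sum\alpha_k\xi_k=\xi$, $\sum\alpha_k=1$, $\alpha_k\ge 0$. The goal is to pull the "average of the $\Phi(\xi_k)$" down to a single evaluation $\Phi(\beta\xi)$. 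The natural idea is to iterate the two-point almost convexity (W4): combining two points $\xi_j,\xi_k$ with the appropriate weights gives $\Phi(\beta\,\tfrac{\alpha_j\xi_j+\alpha_k\xi_k}{\alpha_j+\alpha_k})\le\tfrac{\alpha_j\Phi(\xi_j)+\alpha_k\Phi(\xi_k)}{\alpha_j+\alpha_k}$, and repeating this $m$ times to merge all $m+1$ points into one. The subtlety is that each merge introduces a factor $\beta$ \emph{inside} the function on the intermediate convex combinations, so after $m$ steps one does not land exactly at $\Phi(\beta^m\xi)$ — the nested $\beta$'s multiply the partial sums in a way that must be tracked. Using \ainc{1} (which follows from (W4) by taking $\xi'=0$), one can absorb these: since $t\mapsto\Phi(x,t\eta)/t$ is almost increasing, scaling the argument of an intermediate $\Phi$ by a factor in $[0,1]$ only helps, so a crude bound $\Phi(\beta^{m}\xi)\le$ (something $\le$) $\sum\alpha_k\Phi(\xi_k)=\Phi^\conv(\xi)$ should come out, possibly after also invoking \ainc{1} on the right to compensate the weights. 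Setting the new parameter to $\beta^{m}$ (or $\beta^{m+1}$) then finishes the proof.

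\textbf{Main obstacle.} I expect the bookkeeping in the iterated merge to be the crux: one must verify that the compounded inner constants from $m$ successive applications of (W4), together with the partial-sum normalizations, telescope to a clean power of $\beta$ depending only on $m$, and that the direction of \ainc{1}/\inc{1} is used the right way (scaling down the argument of $\Phi$ decreases its value up to the almost-increasing constant). A clean way to organize this is induction on the number of points: prove that if $\Phi$ is almost convex with constant $\beta$, then for every $N$ and every convex combination $\sum_{k=1}^N\alpha_k\xi_k$ one has $\Phi\big(\beta^{N-1}\sum\alpha_k\xi_k\big)\le\sum\alpha_k\Phi(\xi_k)$; then specialize $N=m+1$ and combine with Lemma~\ref{lem:caratheodory}. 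Care is also needed with the $[0,\infty]$-valued setting (some $\Phi(\xi_k)$ may be $+\infty$, in which case the inequality is trivial) and with measurability/continuity, but those are routine given the standing assumptions.
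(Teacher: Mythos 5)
Your proposal is correct, uses the same two key ingredients as the paper (the easy direction is verbatim the same; the hard direction goes through Lemma~\ref{lem:caratheodory} followed by iterating (W4)), but organizes the iteration differently. You propose a sequential, linear-chain induction on the number of points: $\Phi\big(\beta^{N-1}\sum_{k=1}^N\alpha_k\xi_k\big)\le\sum_{k=1}^N\alpha_k\Phi(\xi_k)$, then set $N=m+1$. As you anticipated, this requires a monotonicity input: in the step $N\to N+1$ you apply (W4) to the pair $\bigl(\beta^{N-1}\eta,\ \beta^{N-1}\xi_{N+1}\bigr)$ with $\eta:=\sum_{k\le N}\tfrac{\alpha_k}{1-\alpha_{N+1}}\xi_k$, and then must replace $\Phi(\beta^{N-1}\xi_{N+1})$ by $\Phi(\xi_{N+1})$. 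The cleanest way to justify this is the consequence $\Phi(\beta\xi)\le\Phi(\xi)$ of (W4) with $\alpha=\alpha'=\tfrac12$, $\xi=\xi'$, iterated; deriving \ainc{1} from $\xi'=0$ as you suggest works too, but silently uses $\Phi(0)=0$, which the corollary does not assume. The paper avoids this issue altogether: it pads the representation to $2^i$ points with $\alpha_k=0$, $\xi_k=0$ for $k>m+1$ and $2^i\ge m+1$, then iterates (W4) down a balanced binary tree. Each level consumes exactly one factor of $\beta$ and after $i$ levels every argument lands on $\Phi(\xi_k)$ directly, so no monotonicity is needed and the constant comes out as $\beta^{\lceil\log_2(m+1)\rceil}$ rather than your $\beta^m$. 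Both are valid; the dyadic scheme is a bit tighter and self-contained, while your sequential induction is perhaps more transparent once the needed monotonicity lemma is in hand.
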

\begin{proof}
Clearly, $\Phi^\conv\le \Phi$ since $\Phi^\conv$ is defined as a minorant of $\Phi$. 
Let $2^i\ge m+1$ and set $\alpha_k:=0$ and $\xi_k:=0$ for $k>m+1$. By the almost convexity condition, 
\[
\Phi\bigg(\beta^i \sum_{k=1}^{2^i}\alpha_k \xi_k \bigg)
\le 
\alpha_{i,1} \Phi\bigg(\beta^{i-1} \sum_{k=1}^{2^{i-1}}\frac{\alpha_k}{\alpha_{i,1}} \xi_k \bigg) + 
\alpha_{i,2} \Phi\bigg(\beta^{i-1} \sum_{k=2^{i-1}}^{2^i}\frac{\alpha_k}{\alpha_{i,2}} \xi_k \bigg)
\]
where 
\[
\alpha_{i,1} := \sum_{k=1}^{2^{i-1}}\alpha_k
\qquad\text{and}\qquad
\alpha_{i,2} := \sum_{k=2^{i-1}+1}^{2^i}\alpha_k. 
\]
Iterating this $i$ times, we obtain that
\[
\Phi\bigg(\beta^i \sum_{k=1}^{2^i}\alpha_k \xi_k \bigg)
\le 
\sum_{k=1}^{2^i} \alpha_k \Phi( \xi_k)
\]
By Lemma~\ref{lem:caratheodory} and this inequality,
\[
\Phi^\conv(\xi) \ge \min\bigg\{\sum_{k=1}^{2^i} \alpha_k \Phi(\xi_k) \,\bigg|\, 
\sum_{k=1}^{2^i} \alpha_k \xi_k =\xi,\ \sum_{k=1}^{2^i} \alpha_k = 1,\, \alpha_k\ge 0 \bigg\}
\ge 
\Phi(\beta^i \xi). 
\]
Thus the almost convexity implies that $\Phi\simeq \Phi^\conv$. 

If, on the other hand, $\Phi\simeq \Phi^\conv$ with constant $\beta$, then we directly obtain
\[
\Phi\big(\beta(\alpha \xi + \alpha'\xi')\big) 
\le
\Phi^\conv(\alpha \xi + \alpha'\xi') 
\le 
\alpha \Phi^\conv(\xi) + \alpha' \Phi^\conv(\xi')
\le
\alpha \Phi(\xi) + \alpha' \Phi(\xi'). \qedhere
\]
\end{proof}


For almost convex functions we easily obtain a Jensen inequality with an extra constant.

\begin{corollary}[Jensen's inequality]\label{cor:jensenAconv}
Let $E\subset \Rm$ have positive, finite measure $\mu(E)$. 
If $\Phi\in C(E; [0, \infty])$ is almost convex, then 
there exists $\beta$ such that 
\[
\Phi\bigg(\beta \fint_E f\, d\mu\bigg)
\le 
\fint_E \Phi(f)\, d\mu
\]
for every $f\in L^\Phi_\mu(E; \Rm)$.
\end{corollary}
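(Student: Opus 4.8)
The plan is to reduce the continuous/integral Jensen inequality to the finite case, where Corollary~\ref{cor:Mstar} (or equivalently the iterated almost convexity used in its proof) applies directly. By Corollary~\ref{cor:Mstar}, almost convexity gives $\Phi\simeq\Phi^\conv$ with some constant $\beta>0$, so $\Phi(\beta\xi)\le\Phi^\conv(\xi)\le\Phi(\xi)$ for all $\xi\in\Rm$. Since $\Phi^\conv$ is genuinely convex and $f\in L^1_\mu(E;\Rm)$ (which follows from $f\in L^\Phi_\mu$ together with the fact that $\Phi$ grows, so $L^\Phi_\mu(E)\hookrightarrow L^1_\mu(E)$ on a finite measure set), the ordinary (vector-valued) Jensen inequality for the convex function $\Phi^\conv$ on $\Rm$ yields
\[
\Phi\bigg(\beta\fint_E f\,d\mu\bigg)
\le
\Phi^\conv\bigg(\fint_E f\,d\mu\bigg)
\le
\fint_E \Phi^\conv(f)\,d\mu
\le
\fint_E \Phi(f)\,d\mu,
\]
which is exactly the claim with the same $\beta$ as in Corollary~\ref{cor:Mstar}.

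The main technical point to address carefully is the applicability of classical Jensen's inequality for $\Phi^\conv$, since $\Phi^\conv$ is extended-real-valued (it may equal $+\infty$) and we only assume $\Phi\in C(E;[0,\infty])$, i.e.\ continuity of the single function $\Phi$ as a map into $[0,\infty]$, not local boundedness. If the right-hand side $\fint_E\Phi(f)\,d\mu=\infty$ there is nothing to prove, so one may assume $\Phi(f)\in L^1_\mu(E)$; then in particular $\Phi(f(x))<\infty$ for a.e.\ $x$, hence $\Phi^\conv(f(x))<\infty$ a.e., and $f(x)$ lies a.e.\ in the (convex) set where $\Phi^\conv$ is finite. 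The barycenter $\fint_E f\,d\mu$ then lies in the closure of that convex set, and the standard supporting-hyperplane argument for convex functions on $\Rm$ gives an affine minorant $\ell$ of $\Phi^\conv$ with $\ell(\fint_E f\,d\mu)=\Phi^\conv(\fint_E f\,d\mu)$ (using lower semicontinuity of $\Phi^\conv$, which holds for greatest convex minorants of continuous functions); integrating $\Phi^\conv(f)\ge\ell(f)$ over $E$ and using that $\ell$ is affine finishes the estimate. This supporting-hyperplane / lower-semicontinuity bookkeeping for the extended-real-valued convex function $\Phi^\conv$ is the only place requiring genuine care; everything else is a direct invocation of earlier results.

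An alternative, entirely self-contained route avoids even naming classical Jensen: approximate $f$ by simple functions $f_j\to f$ with $f_j=\sum_k \xi_k\chi_{E_k}$ on a finite partition $\{E_k\}$ of $E$, set $\alpha_k:=\mu(E_k)/\mu(E)$, and apply the iterated almost-convexity estimate established inside the proof of Corollary~\ref{cor:Mstar}, namely $\Phi(\beta\sum_k\alpha_k\xi_k)\le\sum_k\alpha_k\Phi(\xi_k)$, which reads $\Phi(\beta\fint_E f_j\,d\mu)\le\fint_E\Phi(f_j)\,d\mu$. One then passes to the limit using continuity of $\Phi$ in $[0,\infty]$ on the left and, on the right, Fatou's lemma (after extracting a subsequence with $f_j\to f$ $\mu$-a.e.), giving $\fint_E\Phi(f)\,d\mu\le\liminf_j\fint_E\Phi(f_j)\,d\mu$ only in the wrong direction — so here one instead chooses the $f_j$ with $\Phi(f_j)\to\Phi(f)$ in $L^1$, which is possible since $\Phi(f)\in L^1_\mu$ and simple functions are dense, together with continuity of $\Phi$; this is the step where one must be slightly careful. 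Either way the constant $\beta$ is inherited from Corollary~\ref{cor:Mstar}, and I expect the limiting argument (choosing approximations that converge on both sides simultaneously, in the extended-real-valued setting) to be the only real obstacle.
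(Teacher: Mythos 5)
Your main argument coincides with the paper's proof: apply Corollary~\ref{cor:Mstar} to obtain $\Phi(\beta\xi)\le\Phi^\conv(\xi)\le\Phi(\xi)$ and then invoke classical Jensen's inequality for the convex function $\Phi^\conv$, yielding exactly the paper's three-inequality chain. The paper simply states that chain without the extended-real-valued bookkeeping you spell out, and your alternative simple-function route (which, as you yourself note, has a wrong-direction Fatou step) is not needed.
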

\begin{proof}
By Corollary~\ref{cor:Mstar} and Jensen's inequality for the convex function $\Phi^\conv$, 
\[
\Phi\bigg(\beta \fint_E f\, d\mu\bigg)
\le 
\Phi^\conv\bigg( \fint_E f\, d\mu\bigg)
\le 
\fint_E \Phi^\conv(f)\, d\mu
\le
\fint_E \Phi(f)\, d\mu. \qedhere
\]
\end{proof}

%


\section{Definition of and remarks on conditions}\label{sect:conditions}

The \ainc{1} and almost convexity (W4) conditions connect $\Phi(x, \xi)$ 
for different values of 
$\xi$ with $x$ fixed. However, more advanced results such as the density of smooth functions 
in Sobolev spaces require connecting 
$\Phi(x, \xi)$ for different values of $x$, cf.\ \cite{BorC_pp}. This is the purpose of the conditions
\aonen{\Psi} and \condMn{\Psi}, which generalize \aone{} and \condM{}.  

However, let us first start with the more elementary condition \azero{}: 
there exists $\beta>0$ such that
\begin{equation}\tag{A0}\label{azero}
\Phi(x, \beta\xi) \le 1 \le \Phi(x,\tfrac1\beta \xi)
\end{equation}
for all $\xi \in \Rm$ with $|\xi|=1$ and all $x\in \Omega$. 
Note that \azero{} is implicit in the assumption 
$m_1(|\xi|)\le \Phi(x,\xi)\le m_2(|\xi|)$ for $N$-functions $m_1$ and $m_2$ 
used in \cite{AhmCGY18, BulGS21, Chl18, ChlGSW21, ChlGZ18, ChlGZ18b, GwiSZ18}. 
This property is inherited 
by other versions of $\Phi$:

\begin{lemma}
If $\Phi\in \Phis(\Omega)$ satisfies \azero{}, then so do $\Phi_B^+$, $\Phi_B^-$
and $(\Phi_B^-)^\conv$.
\end{lemma}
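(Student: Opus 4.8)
The plan is to treat the three functions in increasing order of difficulty. For $\Phi_B^+$ and $\Phi_B^-$ nothing is needed beyond unwinding the definitions: \azero{} is a pointwise statement in $x$, so for a unit vector $\xi$ the inequality $\Phi(x,\beta\xi)\le 1$ holds for every $x\in B\cap\Omega$, whence $\Phi_B^+(\beta\xi)=\esssup_{x\in B\cap\Omega}\Phi(x,\beta\xi)\le 1$; likewise $\Phi(x,\tfrac1\beta\xi)\ge 1$ for every such $x$ gives $\Phi_B^-(\tfrac1\beta\xi)\ge 1$, and the remaining two inequalities $\Phi_B^-(\beta\xi)\le 1$ and $\Phi_B^+(\tfrac1\beta\xi)\ge 1$ follow in the same way (here one uses that $B\cap\Omega$ has positive measure so that the essential supremum is bounded below by any a.e.\ lower bound). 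Thus $\Phi_B^+$ and $\Phi_B^-$ satisfy \azero{} with the same $\beta$.

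The real content is the claim for $\psi:=(\Phi_B^-)^\conv$. The upper half of \azero{} remains easy: since $\psi\le\Phi_B^-$ and $\Phi_B^-$ satisfies \inc{1} (being the essential infimum of the convex, hence \inc{1}, functions $\Phi(x,\cdot)$), for any $\beta'\le\beta$ and $|\xi|=1$ we get $\psi(\beta'\xi)\le\Phi_B^-(\beta'\xi)\le\tfrac{\beta'}{\beta}\Phi_B^-(\beta\xi)\le 1$. The genuine obstacle is the lower inequality $\psi(\tfrac1{\beta'}\xi)\ge 1$: a convex minorant can a priori lie far below $\Phi_B^-$, and $\Phi_B^-$ need not be radially monotone, so the bare fact that $\Phi_B^-\ge 1$ on one sphere does not suffice.

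To overcome this I would construct an explicit radial convex minorant of $\Phi_B^-$ that grows linearly away from the origin. Combining \azero{} with \inc{1} shows that $t\mapsto\Phi_B^-(t\xi)/t$ is nondecreasing, so for $|\eta|\ge\tfrac1\beta$, writing $\xi:=\eta/|\eta|$, one gets $\Phi_B^-(\eta)=\Phi_B^-(|\eta|\xi)\ge\beta|\eta|\,\Phi_B^-(\tfrac1\beta\xi)\ge\beta|\eta|$. Hence $g(\eta):=\max\{0,\beta|\eta|-1\}$ is a convex function satisfying $g\le\Phi_B^-$ everywhere (it vanishes on $\{|\eta|\le\tfrac1\beta\}$, where there is nothing to check, and is $\le\beta|\eta|\le\Phi_B^-(\eta)$ outside), so $g\le(\Phi_B^-)^\conv=\psi$ because $\psi$ is the \emph{greatest} convex minorant. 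Since $g(\eta)=1$ precisely when $|\eta|=\tfrac2\beta$, this yields $\psi(\tfrac1{\beta'}\xi)\ge 1$ for $|\xi|=1$ with $\beta':=\tfrac\beta2$, and together with the upper bound of the previous paragraph (valid since $\beta'\le\beta$) we conclude that $(\Phi_B^-)^\conv$ satisfies \azero{} with parameter $\tfrac\beta2$. The only nontrivial step is thus the single observation that \azero{} and \inc{1} together force a linear lower bound outside a ball; once that is in hand, the convex-minorant estimate is automatic, and I would not expect any further obstacle.
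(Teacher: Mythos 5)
Your proof is correct and takes essentially the same route as the paper: the case of $\Phi_B^\pm$ is a direct passage to the essential sup/inf, and for $(\Phi_B^-)^\conv$ the key step is to combine \azero{} with \inc{1} to obtain the linear lower bound $\Phi_B^-(\eta)\ge\beta|\eta|$ for $|\eta|\ge\tfrac1\beta$, which shows $(\beta|\eta|-1)_+$ is a convex minorant of $\Phi_B^-$, hence lies below $(\Phi_B^-)^\conv$, giving the lower half of \azero{} with constant $\tfrac\beta2$. The paper's proof is identical in substance, merely phrasing the linear lower bound pointwise in $x$ before passing to the infimum.
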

\begin{proof}
Taking the supremum or infimum over $x\in \Omega$ in \azero{} of $\Phi$ gives \azero{} for 
$\Phi_B^+$ and $\Phi_B^-$. Since $(\Phi_B^-)^\conv\le\Phi_B^-$, the left inequality 
of \azero{} follows for $(\Phi_B^-)^\conv$. 
If $|\xi|\ge \tfrac1\beta$, then by \inc{1} and \azero{} we conclude that 
\[
\Phi(x, \xi) \ge \beta\,|\xi|\, \Phi(x, \tfrac1\beta \tfrac\xi{|\xi|}) \ge  \beta\,|\xi|. 
\]
Hence, for all $\xi\in \Rm$, $\Phi(x, \xi) \ge (\beta\,|\xi|-1)_+$ 
(since $(\beta\,|\xi|-1)_+=0$ when $|\xi|\le \tfrac1\beta$) and 
so $\Phi_B^-(\xi) \ge (\beta\,|\xi|-1)_+$. 
But the right-hand side is a convex function, so it follows that 
$(\Phi_B^-)^\conv(\xi) \ge (\beta\,|\xi|-1)_+$ since $(\Phi_B^-)^\conv$ is 
defined as the greatest convex minorant. Consequently, 
\[
(\Phi_B^-)^\conv(\tfrac2\beta \xi) \ge (\beta\,\tfrac2\beta-1)_+ = 1,
\]
when $\xi \in \Rm$ with $|\xi|=1$, so $(\Phi_B^-)^\conv$ satisfies \azero{} 
with constant $\tfrac\beta2$. 
\end{proof}

The condition \aone{} was introduced in \cite{Has15} (see also \cite{HarH19, MaeMOS13a}) and is essentially 
optimal for the boundedness of the maximal operator in isotropic generalized Orlicz spaces. 
It also implies the H\"older continuity of solutions and (quasi)minimizers 
\cite{BenHHK21, HarHL21, HarHT17}. For higher regularity, 
we introduced in \cite{HasO22} a vanishing-\aone{} condition along the 
same lines. These previous studies apply to 
the isotropic case, i.e.\ $m=1$. 
In \cite{HasO_pp, HasO_pp2} we generalized the \aone{}-conditions to the anisotropic case, 
although only the quasi-isotropic case was considered in the main results.
 
Chlebicka, Gwiazda, Zatorska-Goldstein and co-authors \cite{AhmCGY18, BulGS21, Chl18, ChlGSW21, ChlGZ18, ChlGZ18b, ChlGZ19, GwiSZ18} considered the assumption \condM{} in the anisotropic case; 
in the next definition their condition is reformulated to make it easier to 
compare with the \aone{} condition (see also Lemma~\ref{lem:azero}); also note that some of the earlier works included 
additional restrictions in the condition.
 
\begin{definition}\label{def:conditions}\label{aonen}
Let $\Phi, \Psi\in \Phis(\Omega)$. We say that $\Phi$ satisfies \aonen{\Psi} or \condMn{\Psi}
if for any $K>0$ there exists $\beta>0$ such that
\begin{equation}\tag{A1-$\Psi$}\label{aone}
\Phi_B^+(\beta \xi)\leq \Phi_B^-(\xi)+1
\qquad\text{when }\Psi_B^-(\xi)\le \tfrac K{\mu(B)}
\end{equation}
or
\begin{equation}\tag{M-$\Psi$}\label{condM}
\Phi_B^+(\beta \xi)\leq (\Phi_B^-)^\conv(\xi)+1
\qquad\text{when }(\Psi_B^-)^\conv(\xi)\le \tfrac K{\mu(B)}
\end{equation}
for all balls $B\subset \Rn$ with $\mu(B)\le 1$ and $\xi\in\Rn$.

When $\Psi(t):=t^s$ and $\Psi:=\Phi$ we use the abbreviations \aonen{s}, \aone{}, \condMn{s} and 
\condM{}. 
\end{definition}

The role of $\Psi$ is to calibrate the almost continuity requirement with the 
information on the function we are interested in and was developed from the initial 
condition \aone{} over the course of several studies \cite{BenHHK21, HarHT17, HarHL21}. 
For instance, we showed in \cite[Theorem~3.9]{BenHHK21} that the weak Harnack inequality holds for non-negative supersolutions of 
$\div(\phi'(|\nabla u|)\frac{\nabla u}{|\nabla u|})=0$ 
if the isotropic $\Phi$-function $\phi$ satisfies \aonen{\psi} and the supersolution 
satisfies $u\in W^{1,\psi}(\Omega)$, where $\psi\in\Phiw(\Omega)$ is a potentially different 
function. Note that this involves a trade-off, since 
larger $\psi$ means more restriction on $u$ and less restriction on $\phi$. 

As far as I know, Chlebicka, Gwiazda, Zatorska-Goldstein and co-authors considered 
\condM{} only in the case $\Psi(t):=t$ and $\Psi(t):=t^p$ (i.e.\ \condMn{1} and \condMn{p} 
in the notation above). However, the next example illustrates why this does not 
 lead to optimal results. 

\begin{example}[Variable exponent double phase]\label{ex:A1}
Let $\phi(x, t):= t^{p(x)} + a(x) t^{q(x)}$ where $a\in C^{0, \alpha}(\Omega)$, $a\ge0$ and 
$1<p \le q$. Now the \aone{} or \condM{} conditions reduce to 
\[
\frac{q(x)}{p(x)} \le 1+ \frac \alpha n
\quad\Longleftrightarrow\quad
\Big(\frac{q}{p}\Big)^+ \le 1+ \frac \alpha n
\]
Let $p^-:= \inf_{x\in\Omega} p(x)$ and 
$p^+:= \sup_{x\in\Omega} p(x)$. 
If we only use fixed exponent gauges such as \aonen{p^-} or \condMn{p^-}, then we 
instead end up with the condition 
\[
\frac{q(x)}{p^-} \le 1+ \frac \alpha n 
\quad\Longleftrightarrow\quad
\frac{q^+}{p^-} \le 1+ \frac \alpha n 
\]
which is worse, and quite unnatural as the largest value of $q$ is bounded by 
the smallest value of $p$. 
\end{example}

As a final remark about the formulation, we note that earlier papers used a 
form without the ``+1'' and instead restricted the range of $\Psi_B^-$. However, 
if \azero{} holds, then these formulations are equivalent. We prove it for \condM{}, 
the same applies to \aone{}.

\begin{lemma}\label{lem:azero}
Let $\Phi\in \Phis(\Omega)$ satisfy \azero{}. Then \condM{} holds if and only if 
\[
\Phi_B^+(\beta \xi)\leq (\Phi_B^-)^\conv(\xi)
\qquad\text{when }(\Phi_B^-)^\conv(\xi)\in [1, \tfrac K{\mu(B)}].
\]
\end{lemma}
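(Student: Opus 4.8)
The plan is to prove the two implications separately, exploiting \azero{} to convert between the ``$+1$'' formulation and the range-restricted one. For the forward direction, assume \condM{} holds: for every $K$ there is $\beta$ with $\Phi_B^+(\beta\xi)\le(\Phi_B^-)^\conv(\xi)+1$ whenever $(\Phi_B^-)^\conv(\xi)\le \tfrac{K}{\mu(B)}$. I want to derive the range-restricted inequality, possibly with a smaller $\beta$. The idea is that when $(\Phi_B^-)^\conv(\xi)\ge 1$, the ``$+1$'' on the right is comparable to $(\Phi_B^-)^\conv(\xi)$ itself, namely $(\Phi_B^-)^\conv(\xi)+1\le 2(\Phi_B^-)^\conv(\xi)$; so I should absorb the factor $2$ into the argument of $\Phi_B^+$ using \inc{1} (which $\Phi_B^+$ inherits from $\Phi$, as noted in the excerpt): $2(\Phi_B^-)^\conv(\xi)\ge\Phi_B^+(\beta\xi)$ gives $(\Phi_B^-)^\conv(\xi)\ge\tfrac12\Phi_B^+(\beta\xi)\ge\Phi_B^+(\tfrac\beta2\xi)$ by \inc{1} applied with $\alpha=\tfrac12$. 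Hence the range-restricted inequality holds with constant $\tfrac\beta2$ on the set where $(\Phi_B^-)^\conv(\xi)\in[1,\tfrac K{\mu(B)}]$.

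For the converse, assume the range-restricted inequality: there is $\beta_0$ with $\Phi_B^+(\beta_0\xi)\le(\Phi_B^-)^\conv(\xi)$ whenever $(\Phi_B^-)^\conv(\xi)\in[1,\tfrac K{\mu(B)}]$. I must recover $\Phi_B^+(\beta\xi)\le(\Phi_B^-)^\conv(\xi)+1$ for \emph{all} $\xi$ with $(\Phi_B^-)^\conv(\xi)\le\tfrac K{\mu(B)}$. Split into two regimes according to the size of $(\Phi_B^-)^\conv(\xi)$. When $(\Phi_B^-)^\conv(\xi)\ge1$, the hypothesis directly gives $\Phi_B^+(\beta_0\xi)\le(\Phi_B^-)^\conv(\xi)\le(\Phi_B^-)^\conv(\xi)+1$. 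When $(\Phi_B^-)^\conv(\xi)<1$, I need to show $\Phi_B^+(\beta\xi)\le 1$ for a suitable $\beta$; this is where \azero{} enters. By \azero{} for $(\Phi_B^-)^\conv$ (from the previous lemma, with some constant $\beta_1$), $(\Phi_B^-)^\conv(\xi)<1$ forces $|\xi|<\tfrac1{\beta_1}$, i.e.\ $|\beta_1\xi|<1$; and by \azero{} for $\Phi_B^+$ (again from the previous lemma, with constant $\beta_2$), $|\beta_2\eta|\le1$ implies $\Phi_B^+(\beta_2\eta)\le1$. Combining, $\Phi_B^+(\beta_1\beta_2\xi)\le\Phi_B^+(\beta_2\xi)\le1$ using \inc{1} to pass from $\beta_1\beta_2\xi$ to $\beta_2\xi$ (since $\beta_1\le1$), provided $|\beta_2\xi|\le1$, which holds as $|\xi|<\tfrac1{\beta_1}$ and we may assume $\beta_1\le\beta_2$ — one should just take $\beta_1,\beta_2$ small enough. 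Taking $\beta:=\min\{\beta_0,\beta_1\beta_2\}$ handles both regimes and yields \condM{}.

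The only subtlety — and the step I would be most careful about — is the bookkeeping of the various $\beta$'s coming from the two instances of \azero{} (for $\Phi_B^+$ and for $(\Phi_B^-)^\conv$) and from \inc{1}, making sure they can be amalgamated into a single constant as the convention in the paper allows, and that the monotonicity direction of \inc{1} is used correctly (decreasing the argument by a factor $\le1$ only decreases the value, which is what we need each time). I would also note that $\Phi_B^+$ and $(\Phi_B^-)^\conv$ both satisfy \azero{} and \inc{1}: the former by the lemma just proved in the excerpt, the latter because $\Phi$ satisfies \inc{1} (being convex), $\Phi_B^+$ and $\Phi_B^-$ inherit it, and a convex minorant of an \inc{1} function is again \inc{1} since $t\mapsto\tfrac{(\Phi_B^-)^\conv(t\xi)}{t}$ is increasing whenever the corresponding property holds for $\Phi_B^-$. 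No genuinely hard obstacle remains; the argument is a routine two-regime split once \azero{} is invoked for both auxiliary functions.
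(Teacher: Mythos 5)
The overall two-regime strategy is exactly the paper's, and your ``forward'' direction (\condM{} gives the range-restricted form by absorbing the ``$+1$'' into a factor $2$ via \inc{1}) matches the paper word for word. However, the step you yourself flag as the subtle one — the sub-case $(\Phi_B^-)^\conv(\xi)<1$ in the other implication — contains a genuine error. You write that \azero{} for $\Phi_B^+$ gives ``$|\beta_2\eta|\le1$ implies $\Phi_B^+(\beta_2\eta)\le1$''; that is not what \azero{} together with monotonicity yields. The correct form is: $|\eta|\le1$ implies $\Phi_B^+(\beta_2\eta)\le1$. You then try to force $|\beta_2\xi|\le1$ by assuming $\beta_1\le\beta_2$, but from $|\xi|<1/\beta_1$ and $\beta_1\le\beta_2$ one only gets $|\beta_2\xi|<\beta_2/\beta_1\ge 1$, so the inequality points the wrong way; and even with the corrected $\beta_2\le\beta_1$, the bound $|\beta_2\xi|\le1$ would not justify $\Phi_B^+(\beta_2\xi)\le1$ under the genuine form of \azero{}, so the intermediate claim $\Phi_B^+(\beta_2\xi)\le1$ fails.

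The fix is to skip the detour through $\Phi_B^+(\beta_2\xi)$ entirely, which is what the paper does. From $(\Phi_B^-)^\conv(\xi)<1$ and \azero{} for $(\Phi_B^-)^\conv$ you already have $|\beta_1\xi|<1$; now apply \azero{} for $\Phi_B^+$ directly to the vector $\eta:=\beta_1\xi$, whose norm is below $1$, to get $\Phi_B^+(\beta_1\beta_2\xi)=\Phi_B^+(\beta_2\eta)\le1$. No comparison between $\beta_1$ and $\beta_2$, and no additional use of \inc{1}, is needed in this sub-case. The paper takes a common $\beta$ for both instances of \azero{} and writes the conclusion as $\Phi_B^+(\beta^2\xi)\le1$. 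With this repair your proof coincides with the paper's.
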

\begin{proof}
If the condition of the lemma holds, then \condM{} needs only to be checked when 
$(\Phi_B^-)^\conv(\xi)\le 1$. This inequality and \azero{} imply that 
$|\xi|\le \frac1\beta$. Thus $\Phi_B^+(\beta^2 \xi)\le 1$ by \azero{}, so 
\condM{} holds with constant $\beta^2$. 

Assume conversely that \condM{} holds and $(\Phi_B^-)^\conv(\xi)\in [1, \tfrac K{\mu(B)}]$. 
Then it follows that 
\[
\Phi_B^+(\beta \xi)\leq (\Phi_B^-)^\conv(\xi) + 1 
\le
2(\Phi_B^-)^\conv(\xi).
\]
Then \inc{1} implies that $\Phi_B^+(\frac\beta2 \xi)\le \frac12\Phi_B^+(\beta \xi)\le (\Phi_B^-)^\conv(\xi)$, so the condition of the lemma holds with constant $\frac\beta2$. 
\end{proof}


\section{Equivalence of conditions}\label{sect:main}

In the previous section we introduced and motivated the conditions 
\aone{} and \condM{} and their variants. We now move on to the main result, and 
consider their relation to one another.

Since $(\Phi_{B}^-)^\conv \le \Phi_{B}^-$, \condMn{\Psi} implies \aonen{\Psi}. 
If $\phi$ is isotropic and satisfies \ainc{1}, then I showed in \cite{Has15} that 
$\phi_{B}^-(\beta t) \le (\phi_{B}^-)^\conv(t)$.
Hence the two conditions are equivalent in this case. However, as pointed out in 
\cite[Remarks 2.3.14 and 3.7.6]{ChlGSW21}, this approach is not possible in the anisotropic case. 
Since I did not understand the examples implicit in these remarks without consulting the 
authors, I include here an explicit example based on ideas of Piotr Nayar communicated 
to me by Iwona Chlebicka. 

\begin{example}\label{ex:noEquivalence}
Let $m=2$ and $\Phi_k((\xi_1, \xi_2)):= \xi_k^2$. Then both $\Phi_1$ and $\Phi_2$ are convex and 
$\Phi_1(e_2)=\Phi_2(e_1)=0$. Denote $\Phi:=\min\{\Phi_1,\Phi_2\}$. It follows that 
\[
\Phi^\conv(\alpha_1e_1+\alpha_2e_2) 
\le \alpha_1\Phi^\conv(e_1)+\alpha_2\Phi^\conv(e_2)
\le \alpha_1\Phi(e_1)+\alpha_2\Phi(e_2) = 0, 
\]
where $\alpha_1+\alpha_2=1$ and $\alpha_1,\alpha_2\ge 0$. Thus we see that $\Phi^\conv\equiv 0$. 
Since $\Phi(\beta(e_1+e_2)) = \Phi_1(\beta(e_1+e_2))=\beta^2$ but $\Phi^\conv\equiv 0$, 
the relation $\Phi\simeq \Phi^\conv$ does not hold. 
\end{example}

Even though $\Phi_{B}^-(\beta\xi) \le (\Phi_{B}^-)^\conv(\xi)$ does not hold in general, we next show that 
we can construct an almost convex minorant which is comparable to $\Phi_B^-$  when \aone{} holds, 
and can be used in \condM{}. We prove that \aone{} implies \condM{} in the 
main case $\Psi:=\Phi$, which corresponds to the natural energy space $L^\Phi$ or 
$W^{1,\Phi}$. The implication for \aonen{\Psi} and \condMn{\Psi} when $\Psi\ne \Phi$ remains an open problem.

Let $\Phi:\Rm\to [0,\infty]$ be a strong $\Phi$-function independent of $x$. 
Denote $K_s:= \{ \Phi \le s\}$ and 
observe that it is a convex compact set which includes $0$ in its interior. Define 
\[
\| \xi \|_{K_s} := \inf\{ \lambda>0 \,|\, \tfrac \xi\lambda \in K_s \}
\qquad
\text{and}
\qquad
N_s(\xi) := s \max\{1, \| \xi \|_{K_s} \}. 
\]
Here $\| \cdot \|_{K_s} $ is the Minkowski functional of the set $K_s$, first 
studied by Kolmogorov \cite{Kol34}. The Luxemburg norm $\|\cdot\|_\Phi$ defined previously is another 
example of a Minkowski functional. 
Note that $N_s$ is a convex function with $\{N_s \le s\} = K_s$. 
Since $\Phi$ is convex, $\Phi(\lambda \xi) \le \lambda \Phi(\xi)$ for $\lambda\le 1$. Thus 
$N_s \le \Phi$ outside $K_s$, $N_s\ge \Phi$ in $K_s$ and $N_s=\Phi$ on the boundary 
$\partial K_s$. In other words, we take the $s$-level set of $\Phi$ and replace 
$\Phi$ outside of it by the function $N_s$ which grows linearly. 


In Example~\ref{ex:noEquivalence} we showed that the minimum of two convex functions need 
not be even almost convex. However, in the next proposition we show that $\min \{\Phi, N_s\}$ 
is almost convex, since the two functions are somehow compatible. This will be used 
to construct a convex minorant of $\Phi_B^-$. The proposition also demonstrates the utility of 
the almost convexity condition, as it seems much more difficult to choose $N_s$ to make the 
minimum convex while still being a minorant of $\Phi_B^-$. 

\begin{proposition}\label{prop:almostConvex}
Let $\Phi:\Rm\to [0,\infty]$ be a strong $\Phi$-function.
Then $M_s := \min \{\Phi, N_s\}$ is almost convex.
\end{proposition}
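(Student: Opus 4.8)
The plan is to exploit that $M_s$ glues together $\Phi$ on the body $K_s$ with the linearly growing convex function $N_s$ outside $K_s$, and that these two pieces fit together radially. First I would record the shape of $M_s$: since $N_s\equiv s$ on $K_s$ and $N_s\le\Phi$ off $K_s$ (both observed in the text), we have $M_s=\Phi$ on $K_s$ and $M_s=N_s$ off $K_s$; in particular $M_s\le\Phi$ and $M_s\le N_s$ everywhere and $M_s(0)=0$.

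The key device is a radial normalization. For $\xi\in\Rm$ put $r:=\max\{1,\|\xi\|_{K_s}\}$ and $\omega:=\xi/r$, and likewise $r',\omega'$ from $\xi'$; then $\omega,\omega'\in K_s$ and $r,r'\ge1$. I would first establish the identity
\[
r\,\Phi(\omega)=M_s(\xi)\qquad\text{and}\qquad r'\,\Phi(\omega')=M_s(\xi').
\]
Indeed, if $\xi\in K_s$ then $r=1$, $\omega=\xi$ and $M_s(\xi)=\Phi(\xi)$; if $\xi\notin K_s$ then $\omega\in\partial K_s$, so $\Phi(\omega)=s$ (a boundary point of a sublevel set of a continuous function attains the level, using that $K_s$ is compact with $0$ in its interior), while convexity and $\Phi(0)=0$ give $\Phi(\xi)=\Phi(r\omega)\ge r\Phi(\omega)=rs$, whence $M_s(\xi)=N_s(\xi)=s\,\|\xi\|_{K_s}=sr$. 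I would also use subadditivity and positive homogeneity of the Minkowski functional to get $\|\alpha\xi+\alpha'\xi'\|_{K_s}\le\alpha r+\alpha'r'=:\Lambda$ (here $\|\omega\|_{K_s},\|\omega'\|_{K_s}\le1$).

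Now take $\beta:=\tfrac12$ and set $\zeta:=\alpha\xi+\alpha'\xi'$, and split into two cases. If $\beta\Lambda\le1$, then $\beta\zeta=\beta\alpha r\,\omega+\beta\alpha'r'\,\omega'+(1-\beta\Lambda)\cdot0$ is a convex combination, so convexity of $\Phi$ (and $\Phi(0)=0$) together with the identity above give
\[
M_s(\beta\zeta)\le\Phi(\beta\zeta)\le\beta\alpha r\,\Phi(\omega)+\beta\alpha'r'\,\Phi(\omega')
=\beta\big(\alpha M_s(\xi)+\alpha'M_s(\xi')\big)\le\alpha M_s(\xi)+\alpha'M_s(\xi').
\]
If instead $\beta\Lambda>1$, I use $M_s(\beta\zeta)\le N_s(\beta\zeta)=s\max\{1,\beta\|\zeta\|_{K_s}\}\le s\beta\Lambda=s\beta\alpha r+s\beta\alpha'r'$. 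For each summand, $s\beta\alpha r=\beta\alpha M_s(\xi)\le\alpha M_s(\xi)$ if $\xi\notin K_s$ (then $M_s(\xi)=sr$), while $s\beta\alpha r=s\beta\alpha$ if $\xi\in K_s$ (then $r=1$); similarly for the primed summand. Since $\Lambda\le\max\{r,r'\}$, the hypothesis $\beta\Lambda>1$ forces $\max\{r,r'\}>1$, so at most one summand is of the second ``bad'' type. In that case, say $\xi\in K_s$ and $\xi'\notin K_s$ (the reverse being symmetric, and both-outside being immediate), and then $\Lambda=\alpha+\alpha'r'>2$ with $\alpha\le1$ forces $\alpha'r'>1$, so $s\beta\alpha\le\tfrac s2<\tfrac s2\alpha'r'=(1-\beta)\alpha'M_s(\xi')$, and hence
\[
M_s(\beta\zeta)\le s\beta\alpha+\beta\alpha'M_s(\xi')\le(1-\beta)\alpha'M_s(\xi')+\beta\alpha'M_s(\xi')=\alpha'M_s(\xi')\le\alpha M_s(\xi)+\alpha'M_s(\xi').
\]
Thus $M_s$ is almost convex with $\beta=\tfrac12$.

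I expect the case $\beta\Lambda>1$ to be the genuine obstacle: $M_s$ is not positively homogeneous, so a point deep inside $K_s$ adds almost nothing to the right-hand side yet still contributes weight $r=1$ to the $N_s$-estimate; the resolution is that $\beta\Lambda>1$ forces the companion point to lie proportionally far outside $K_s$, and the choice $\beta=\tfrac12$ makes the two contributions balance. The only routine points to verify are the degenerate weights $\alpha\in\{0,1\}$ (handled automatically above) and the identification $\Phi\equiv s$ on $\partial K_s$, which rests on continuity of $\Phi$ in the topology of $[0,\infty]$ and compactness of $K_s$.
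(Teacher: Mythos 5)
Your proof is correct, and it takes a genuinely different route from the paper's. The paper also begins by splitting on whether $\xi,\xi'\in K_s$, but in the mixed case it makes an auxiliary geometric construction: it fixes the midpoint $\tilde\xi$, introduces the intersection points $\zeta',\eta'$ of the segment $[0,\xi']$ with two auxiliary lines, uses similarity of triangles to relate $\alpha'$ to the ratio $|\zeta'|/|\xi'|$, and then must split further according to whether $\tilde\xi\in K_s$ (Case~2 is reduced to Case~1 by scaling to the boundary). You instead replace the whole geometric argument by a radial normalization via the Minkowski functional: writing $\xi=r\omega$, $\xi'=r'\omega'$ with $\omega,\omega'\in K_s$ and isolating the clean identity $M_s(\xi)=r\,\Phi(\omega)$ (which encodes exactly that $M_s$ equals $\Phi$ on $K_s$ and grows linearly outside), after which the estimate is pure convexity of $\Phi$ padded by $0$ when $\beta\Lambda\le 1$, and a one-line $N_s$-bound plus the observation $\alpha\le 1<\alpha'r'$ when $\beta\Lambda>1$. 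Your route is more systematic (no picture, no similar triangles, and the ``both in'' and ``both out'' subcases are not treated specially), and it yields the explicit constant $\beta=\tfrac12$, whereas tracking the paper's constants gives something like $\beta=\tfrac18$. The one place that deserves a word of care, which you do flag, is the identification $\Phi\equiv s$ on $\partial K_s$; this is where continuity of $\Phi$ in the $[0,\infty]$ topology enters, and it plays the same role that the explicit scaling to $\partial K_s$ plays in the paper's Case~2.
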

\begin{proof}
Note that $M_s=\Phi\chi_{K_s} + N_s \chi_{\Rm\setminus K_s}$ and let $\alpha,\alpha'>0$ with 
$\alpha+\alpha'=1$. If $\xi,\xi' \not\in K_s$, then the convexity of $N_s$ implies that 
\[
M_s\big(\beta (\alpha \xi + \alpha'\xi')\big) 
\le
N_s\big(\beta (\alpha \xi + \alpha'\xi')\big) 
\le
\alpha N_s(\xi) + \alpha' N_s(\xi')
=
\alpha M_s(\xi) + \alpha' M_s(\xi').
\]
If $\xi,\xi'\in K_s$, then the inequality follows from the convexity of $\Phi$, 
which holds by assumption. Therefore it suffices to show that 
\[
M_s\big(\beta (\alpha \xi + \alpha' \xi')\big)
\le
\alpha \Phi(\xi) + \alpha' N_s(\xi') 
\]
when $\xi\in K_s$ and $\xi'\not\in K_s$. Define $\tilde \xi := \frac12(\alpha \xi + \alpha' \xi')$. 
We will show that 
\begin{equation}\label{eq:Cineq}
M_s(\tilde\xi)
\le
C(\alpha \Phi(\xi) + \alpha' N_s(\xi')).
\end{equation}
Observe that $M_s$ satisfies \inc{1}, since $N_s$ and $\Phi$ do.
By \inc{1}, \eqref{eq:Cineq} implies the previous inequality with constant 
$\beta:=\frac1{2C}$ and concludes 
the proof. We consider two cases to prove \eqref{eq:Cineq}.

\textbf{Case 1: $\tilde \xi \in K_s$.} Then $M_s(\tilde \xi) 
\le s \le N_s(\xi')$ and so \eqref{eq:Cineq} holds with $C=2$ when $\alpha' > \frac12$. 
Thus we may assume that $\alpha \ge \frac12$. Now if $M_s(\tilde \xi)\le 2 \Phi(\xi)$, then 
\eqref{eq:Cineq} holds with $C=4$. Hence we 
further assume that $M_s(\tilde \xi)> 2\Phi(\xi)$. 

We may assume that $\xi$, $\xi'$ and $0$ are not collinear since  
in the collinear case we can choose $\xi'_k\to \xi'$ such that $\xi$, $\xi'_k$ and $0$ are 
not collinear and use the continuity of $M_s$ and $N_s$. 
Let $\zeta'$ be the intersection of the segment $[0, \xi']$ and the line through $\xi$ and $\tilde \xi$ (see Figure~\ref{fig:construction}). 
If $\zeta'\in K_s$, then $\tilde \xi = \theta \xi + (1-\theta)\zeta'$ for some $\theta\in (0,1)$. 
By the convexity of $\Phi$ and $M_s(\tilde \xi)> 2\Phi(\xi)$ we have 
\[
M_s(\tilde \xi) = \Phi(\tilde \xi) 
\le \theta \Phi(\xi) + (1-\theta) \Phi(\zeta')
\le \tfrac12 M_s(\tilde \xi) + M_s(\zeta'). 
\]
Thus $M_s(\zeta')=\Phi(\zeta') \ge \frac12M_s(\tilde \xi)$. If, on the other hand, $\zeta'\not\in K_s$, then $M_s(\zeta')\ge s\ge M_s(\tilde \xi)$. 
In either case, we have $M_s(\zeta')\ge \frac12 M_s(\tilde \xi)$.

\begin{figure}[ht!]
\begin{tikzpicture}[scale=0.8]
\usetikzlibrary{calc}
\coordinate[label=left:$\xi$]  (A) at (-2,4);
\coordinate[label=above:$2\tilde \xi$] (C) at (1,4);
\coordinate[label=right:$\xi'$] (B) at (10,4);
\coordinate[label=below:$0$] (O) at (0,0);
\fill (A) circle[radius=2pt];
\fill (B) circle[radius=2pt];
\fill (C) circle[radius=2pt];
\fill (O) circle[radius=2pt];

\coordinate[label=below:$\,\tilde \xi$] (d) at ($ (O)!.48!(C) $);
\coordinate[label=below:$\zeta'$] (e) at ($ (O)!.19!(B) $);
\coordinate[label=above:$\eta'$] (e) at ($ (O)!.27!(B) $);
\fill ($ (C)!.5!(O) $) circle[radius=2pt];
\fill ($ (O)!.2!(B) $) circle[radius=2pt];
\fill ($ (O)!.25!(B) $) circle[radius=2pt];
\coordinate[label=right:$2\tilde \xi - \xi$] (E) at (3, 0);
\fill (E) circle[radius=2pt];

\draw [line width=1pt] (A) -- (B) -- (O) -- cycle;
\draw [line width=1pt] (C) -- (O);
\draw [line width=1pt, dashed] (A) -- (E);
\draw [line width=1pt, dashed] (C) -- (E);
\draw [line width=1pt, dashed] (O) -- (E); 
\end{tikzpicture}
\caption{Construction of auxiliary points}\label{fig:construction}
\end{figure} 

Consider the parallelogram $(0, \xi, 2\tilde\xi, 2\tilde\xi-\xi)$. Let $\eta'$ be the 
intersection of the segments $[2\tilde\xi, 2\tilde\xi-\xi]$ and 
$[0,\xi']$ (see Figure~\ref{fig:construction}). 
From $2\tilde\xi=(1-\alpha')\xi+\alpha'\xi'$ we observe that 
\[
\alpha'  
=
\frac {|2\tilde \xi-\xi|}{|\xi-\xi'|}
=
\frac{|\eta'|}{|\xi'|}
\ge
\frac{|\zeta'|}{|\xi'|} =: \nu \in (0,1);
\]
the second equality follows since the triangles $(\xi', 2\tilde \xi, \eta')$ and $(\xi', \xi, 0)$ 
are similar. Thus, by \inc{1} of $M_s$,
\[
N_s(\xi') = M_s(\xi') 
\ge
\tfrac1\nu M_s( \nu \xi' )
=
\tfrac1\nu M_s( \zeta' ) 
\ge
\tfrac1{2\nu} M_s(\tilde \xi)
\ge
\tfrac1{2\alpha'} M_s(\tilde \xi),
\]
where we used the conclusion of the previous paragraph in the penultimate step. 
The inequality 
\[
M_s\big(\tilde \xi\big)
\le 2\alpha' N_s(\xi')
\]
follows, so \eqref{eq:Cineq} holds with $C=2$. 

\textbf{Case 2: $\tilde \xi \not\in K_s$.}  
Let $\nu:= \|\tilde \xi \|_{K_s}^{-1} < 1$. Since $K_s$ is closed, it follows from the definition 
of $\|\cdot\|_{K_s}$ that $\Phi(\nu \tilde \xi) = s$ and $\nu \tilde \xi \in \partial K_s$.
Furthermore, $N_s(\nu\tilde\xi)=s=M_s(\nu\tilde\xi)$ and so 
\begin{align*}
M_s(\tilde \xi)
= s \|\tilde \xi\|_{K_s}
= \tfrac1\nu s 
&= 
\tfrac1\nu M_s(\nu \tilde \xi) 
\le
\tfrac4\nu (\alpha \Phi(\nu \xi) + \alpha' N_s(\nu \xi')) 
\le
4(\alpha \Phi(\xi) + \alpha' N_s(\xi')),
\end{align*}
where we used the previous case for $\nu \tilde \xi \in K_s$ in the 
first inequality and \inc{1} for the last step. 
%
\end{proof}

We are ready to prove the main theorem, i.e.\ the equivalence of 
\aone{} and \condM{}.

\begin{proof}[Proof of Theorem~\ref{thm:equiv}]
Since $(\Phi_{B}^-)^\conv\le \Phi_B^-$, it follows from \condM{} that
\[
\Phi_B^+(\beta \xi) 
\le 
(\Phi_B^-)^\conv (\xi)+1
\le
\Phi_B^-(\xi)+1,
\]
when $\xi\in \Rm$ with $\Phi_B^-(\xi)\le \frac K{\mu(B)}$, where $B\subset\Rn$ is a ball, 
which gives \aone{}. 

Assume now conversely that \aone{} holds and let $s:=\frac K{\mu(B)}+1$ for a ball $B\subset\Rn$ 
with $\mu(B)\le 1$. 
Define  $N_s$ as before based on $K_s:=\{\xi\in \Rm\,|\, \Phi_B^+(\beta\xi) \le s\}$ 
and set $M_s(\xi) := \min \{\Phi_B^+(\beta\xi), N_s(\xi)\}$.
By Proposition~\ref{prop:almostConvex}, $M_s$ is almost convex so 
$M_s(\beta'\xi) \le (M_s)^\conv(\xi)$ by Corollary~\ref{cor:Mstar}. 

If $\xi\in K_s$, then $M_s(\xi) = \Phi_B^+(\beta\xi)\le s$. 
Now either $\Phi_B^-(\xi)\le \frac K{\mu(B)}$ in which case \aone{} implies that 
$\Phi_B^+(\beta\xi) \le \Phi_{B}^-(\xi)+1$,
or $\Phi_B^-(\xi)> \frac K{\mu(B)}$ in which case $\Phi_B^+(\beta\xi)\le s \le \Phi_{B}^-(\xi)+1$.
Combining the two cases, we find that  
\[
\Phi_B^+(\beta\xi) \le \Phi_{B}^-(\xi)+1\qquad\text{for all } \xi\in K_s.
\]
If $\xi\not\in K_s$, then $\nu:= \| \xi \|_{K_s}^{-1} < 1$. 
As in Case 2 of the previous proof $\nu \xi \in \partial K_s$ and $\Phi_B^+(\beta\nu\xi)=s$.
If $\Phi_B^-(\nu \xi)< \frac K{\mu(B)}$, then \aone{} implies that 
$\Phi_B^+(\beta\nu\xi) \le \Phi_B^-(\nu \xi) + 1< s$, which is a contradiction. 
Therefore $\Phi_B^-(\nu \xi)\ge \frac K{\mu(B)} = s-1$ and so 
\begin{align*}
M_s(\xi)
= 
N_s(\xi)
=
\tfrac1\nu s
\le
\tfrac1\nu \tfrac s{s-1} \Phi_{B}^-(\nu\xi)
\le
\tfrac s{s-1}  \Phi_{B}^-(\xi),
\end{align*}
where we used \inc{1} of $\Phi_B^-$ in the last step. Note that $\frac s{s-1}=1+\frac{\mu(B)}K\le 1+\frac1K$ since we assumed that $\mu(B)\le 1$. 

In the previous paragraph we have shown that $M_s\le (1+\frac1K)\Phi_B^- + 1$. 
Therefore, the convex minorant of $M_s$ is also a convex minorant of $(1+\frac1K)\Phi_B^-+1$, and 
we conclude that $(M_s)^\conv\le (1+\tfrac1K)(\Phi_B^-)^\conv+1$
since $(\Phi_B^-)^\conv$ is the greatest convex minorant of $\Phi_B^-$. 
We noted above that $M_s(\beta'\xi) \le (M_s)^\conv(\xi)$. Therefore,
\[
M_s(\beta' \xi) \le (1+\tfrac1K)(\Phi_B^-)^\conv(\xi)+1
\qquad\text{for all }\xi\in \Rm. 
\]

Let us show that \condM{} holds. Assume that $(\Phi_B^-)^\conv(\xi)\le \frac K{\mu(B)}$. 
By \inc{1} and the conclusion of the previous paragraph, 
\[
M_s(\tfrac K{K+1}\beta' \xi) 
\le 
\tfrac K{K+1} M_s(\beta' \xi) 
\le 
(\Phi_B^-)^\conv(\xi)+1 \le s.
\]
Therefore $\tfrac K{K+1}\beta' \xi\in K_s$
and $M_s(\tfrac K{K+1}\beta' \xi) =\Phi_B^+(\tfrac K{K+1}\beta\beta' \xi)$. 
Thus 
\[
\Phi_B^+(\tfrac K{K+1}\beta\beta' \xi)\le (\Phi_B^-)^\conv(\xi)+1
\] 
and we have established \condM{} with constant $\frac K{K+1} \beta\beta'$.
\end{proof}
  
The assumption $\Phi_B^-(\xi)\le \frac K{\mu(B)}$ from \aone{} is 
somewhat difficult to verify. In the 
isotropic case, if we assume that $\rho_\phi(f)\le 1$, then we can conclude from 
Jensen's inequality that 
\begin{equation}\label{eq:A1condition}
\phi_B^-\bigg( \beta \fint_B f\, d\mu\bigg) \le \fint_B \phi(x,|f|)\, d\mu\le \frac1{\mu(B)}.
\end{equation}
Thus we may apply \aone{} to conclude that 
\[
\phi_B^+\bigg( \beta^2 \fint_B f\, d\mu\bigg) 
\le 
\phi_B^-\bigg( \beta \fint_B f\, d\mu\bigg) + 1.
\]
This argument is not possible in the anisotropic case, since $(\Phi_B^-)^\conv$ is not 
comparable to $\Phi_B^-$.
Fortunately, the condition of \condM{} is easier to use. 

\begin{proof}[Proof of Corollary~\ref{cor:jensen}]
By Theorem~\ref{thm:equiv}, $\Phi$ satisfies \condM{}. 
Since $(\Phi_B^-)^\conv$ is convex, it follows by Jensen's inequality that 
\[
(\Phi_B^-)^\conv \bigg(  \fint_B f\, d\mu\bigg) 
\le 
\fint_B (\Phi_B^-)^\conv (f)\, d\mu 
\le 
\fint_B \Phi(x,f)\, d\mu \le \frac 1{\mu(B)}.
\]
Therefore we can use \condM{} with $\xi = \fint_B f\, d\mu$ and the previous inequality 
to conclude that 
\[
\Phi_B^+ \bigg( \beta \fint_B f\, d\mu\bigg) 
\le 
(\Phi_B^-)^\conv \bigg( \fint_B f\, d\mu\bigg) + 1
\le 
\fint_B \Phi(x,f)\, d\mu + 1. \qedhere
\]
\end{proof}

\section*{Acknowledgment}

I would like to thank Iwona Chlebicka for her help in explaining 
\cite[Remarks 2.3.14 and 3.7.6]{ChlGSW21} and 
the Wihuri Foundation for financial support.


\bigskip

 \noindent\small{\textsc{P. Hästö}\\ 
Department of Mathematics and Statistics,
FI-20014 University of Turku, Finland}\\
\footnotesize{\texttt{peter.hasto@utu.fi}}\\

\end{document}